\renewcommand*{\backref}[1]{}
\renewcommand*{\backrefalt}[4]{%
	\ifcase #1 Not cited.%
	\or        Cited on page~#2.%
	\else      Cited on pages~#2.%
	\fi}
\newtheorem{thm}{Theorem}[section]
\newtheorem*{thm*}{Theorem}
\newtheorem{lem}[thm]{Lemma}
\newtheorem{conj}[thm]{Conjecture}
\theoremstyle{definition}
\theoremstyle{remark}
\newtheorem{rmk}[thm]{Remark}
\title{Primitive weird numbers having more than three distinct prime factors}
\author[G. Amato]{Gianluca Amato}
\address{Universit\`a di Chieti-Pescara\\ Dipartimento di Economia Aziendale, viale della Pineta 4, I-65129 Pescara, Italy}
\email{gianluca.amato@unich.it}
\author[M. F. Hasler]{Maximilian F. Hasler}
\address{Universit\'e des Antilles\\D.S.I., B.P. 7209\\ Campus de Schoelcher\\ F-97275 Schoelcher cedex, Martinique (F.W.I.)}
\email{Maximilian.Hasler@Univ-Antilles.fr}
\author[G. Melfi]{Giuseppe Melfi}
\address{University of Applied Sciences of Western Switzerland\\ HEG Arc\\
	Espace de l'Europe, 21, CH-2000 Neuch\^atel}
\email{giuseppe.melfi@he-arc.ch}
\author[M. Parton]{Maurizio Parton}
\address{Universit\`a di Chieti-Pescara\\ Dipartimento di Economia, viale della Pineta 4, I-65129 Pescara, Italy}
\email{maurizio.parton@unich.it}
\thanks{Part of this research was done while the first author was visiting the Department of Mathematics and Computer Science at Wesleyan University.}
\keywords{abundant numbers, semiperfect numbers, almost perfect numbers, sum-of-divisor function, Erd\H os problems, weird numbers, primitive weird numbers}
\subjclass[2010]{Primary 11A25; Secondary 11B83}
\newenvironment{pfma}{\indent\rm Proof of \ref{thm:main}.1}{\hfill $\square$ \par \indent}
\newenvironment{pfmb}{\indent\rm Proof of \ref{thm:main}.2}{\hfill $\square$ \par \indent}
\newcommand{\NN}{\mathbb{N}}
\newcommand{\ug}{=}
\begin{document}

\begin{abstract}
In this paper we study some structure properties of pri\-\-mi\-tive weird numbers
in terms of their factorization. We give sufficient conditions to 
ensure that a positive integer is weird. Two algorithms for generating 
weird numbers having a given number of distinct prime factors 
are presented. These algorithms yield primitive weird numbers of the form $mp_1\dots p_k$ for a suitable deficient positive integer $m$ and primes $p_1,\dots,p_k$ and generalize a recent technique developed for generating primitive weird numbers of the form $2^np_1p_2$.
The same techniques can be used to search for odd weird numbers, whose existence is still an open question.
\end{abstract}

\maketitle

\section{Introduction}

Let $n\in\NN$ be a natural number, and let $\sigma(n)\ug\sum_{d|n}d$ be the sum of its divisors. 
If $\sigma(n)>2n$, then $n$ is called \emph{abundant}, whereas if $\sigma(n)<2n$, then $n$ is called \emph{deficient}. \emph{Perfect numbers} are those for which $\sigma(n)=2n$. According to \cite{Guy}, we will refer to $\Delta(n)\ug\sigma(n)-2n$ as the \emph{abundance} of $n$, and to $d(n)\ug 2n-\sigma(n)=-\Delta(n)$ as the \emph{deficience} of $n$.
If $n$ can be expressed as a sum of distinct proper divisors, then $n$ is called \emph{semiperfect}, or sometimes also \emph{pseudoperfect}. Slightly abundant numbers with $\Delta(n)=1$ are called \emph{quasi-perfect}, and slightly deficient numbers with $d(n)=1$ are called \emph{almost perfect}.

A \emph{weird number} is a number which is abundant but not semiperfect. In other words, $n\in\NN$ is weird if it is abundant and it cannot be written as the sum of some 
of its proper divisors.

Weird numbers have been defined in 1972 by Benkoski \cite{Ben}, and appear to be rare: for instance, up to $10^4$ we have only $7$ of them \cite{Slo}.
Despite this apparent rarity, which is the reason of the name, weird numbers are easily proven to be infinite: if $n$ is weird and $p$ is a prime larger than $\sigma(n)$, then $np$ is weird (see for example \cite[page~332]{Fri}).
But a much stronger property is true: Benkoski and Erd\H os, in their joint 1974 paper \cite{Ben-Erd}, proved that the set of weird numbers has positive asymptotic density.

Several questions on weird numbers have not been settled yet. For instance, if we look for \emph{primitive weird numbers}, that is, that are not multiple of other weird numbers, we don't know whether they are infinite or not:
\begin{conj}\cite[end of page 621]{Ben-Erd}
There exist infinitely many primitive weird numbers.
\end{conj}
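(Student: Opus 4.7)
The plan is to exhibit, for infinitely many $k$, a primitive weird number of the form $n = m p_1 \cdots p_k$, where $m$ is a carefully chosen deficient integer and $p_1 < \cdots < p_k$ are distinct primes coprime to $m$. The key reformulation is that $n$ is semiperfect precisely when $\Delta(n)$ itself is a sum of distinct proper divisors of $n$, since the proper divisors of $n$ sum to $n + \Delta(n)$. Weirdness therefore means abundance together with the inexpressibility of $\Delta(n)$ as such a sum. If one can arrange $\min_i p_i > \Delta(n)$ --- which is possible when the primes are tuned so that $\sigma(n)/n$ only barely exceeds $2$ --- then every divisor of $n$ of size at most $\Delta(n)$ divides $m$, and the inexpressibility condition reduces to the purely divisor-theoretic statement that $\Delta(n)$ is not a sum of distinct divisors of $m$.

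With this reformulation in hand I would parametrize the construction. Multiplicativity of $\sigma$ gives
\[
\Delta(n) = \sigma(m) \prod_{i=1}^k (p_i + 1) - 2 m \prod_{i=1}^k p_i,
\]
so as $p_k$ varies among primes (with $p_1, \ldots, p_{k-1}$ fixed) the value $\Delta(n)$ takes a prescribed affine form in $p_k$. The goal is to choose $p_k$ so that this value is simultaneously positive, smaller than $p_k$, and not a sum of distinct divisors of $m$. For fixed $m$ the set of integers expressible as sums of distinct divisors of $m$ has a specific arithmetic structure, and one would show --- either directly for special $m$ or via density estimates combined with Dirichlet's theorem for general $m$ --- that infinitely many admissible $p_k$ exist at each stage. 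Primitivity must then be enforced: every proper divisor of $n$ has the form $m' \prod_{i \in S} p_i$ with $m' \mid m$ and $(m', S) \neq (m, \{1,\ldots,k\})$; a uniform quantitative bound on $\sigma(d)/d$ for $d \mid m$ combined with lower bounds on the $p_i$ should make each such divisor deficient, hence not weird.

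The central obstacle, and the reason the conjecture remains open, lies in simultaneously satisfying the size, arithmetic, and non-representability constraints at every stage $k$. The set of values reachable by sums of distinct divisors of $m$ can, depending on $m$, be dense enough in the relevant range to leave essentially no gaps for $\Delta(n)$ to land in; ruling out a sporadic collapse of admissible primes as $k$ grows requires genuine combinatorial control of this representability set. A feasible concrete attack would therefore generalize the $m = 2^n$ construction mentioned in the abstract, working with families of $m$'s whose divisor-sum combinatorics is explicitly tractable, and trading the full generality of the conjecture for transparent control of the non-representability step.
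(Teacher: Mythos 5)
There is a fundamental mismatch here: the statement you are asked to prove is a \emph{conjecture}, and the paper does not prove it --- it remains open. The paper only provides sufficient conditions (Theorem~\ref{thm:main}) for certain integers $mp_1\cdots p_k$ to be primitive weird, which yields finitely many new examples by computation; the authors explicitly note that infinitude is known only \emph{conditionally}, on Cram\'er's conjecture about prime gaps (the result of \cite{Mel}). Your outline is, in substance, a re-derivation of the paper's own machinery: the reformulation ``weird $\iff$ abundant and $\Delta(n)$ not a sum of distinct divisors'' is Lemma~\ref{fundam}; arranging $p_1>\Delta(n)$ so that only divisors of $m$ can contribute is the $j=0$ case of the set $U_{m,p_1,p_k}$ in Theorem~\ref{thm:main}.1; and enforcing primitivity by showing each $w/p$ is deficient is Lemma~\ref{primitiveweird}. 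So as a description of how to \emph{search} for primitive weird numbers, you have independently reconstructed the paper's approach.

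But as a proof of infinitude it has a genuine, and in fact currently unfillable, gap --- one you partly acknowledge yourself. The construction requires, for each stage, primes $p_1<\dots<p_k$ confined to a narrow window (roughly around $2km/d$, per Theorem~\ref{abudef}) and positioned so that $\Delta(n)$ lands in a short prescribed interval while avoiding the set of subset-sums of divisors of $m$. Producing \emph{one} such tuple is a finite check; producing \emph{infinitely many} requires an unconditional guarantee that primes keep appearing in these shrinking relative windows, which is exactly the kind of prime-gap control that is not known unconditionally. Your appeal to ``Dirichlet's theorem combined with density estimates'' does not supply this: Dirichlet gives primes in arithmetic progressions, not primes in short intervals with a prescribed value of an affine form avoiding a subset-sum set. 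Until that step is made rigorous, the argument establishes nothing beyond what the paper already does, and the conjecture stays open.
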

In this respect, the third author recently proved in \cite{Mel} that the infiniteness of primitive weird numbers follows by assuming the classic Cramér conjecture on gaps between consecutive primes~\cite{Cra}.

Another open question is the existence of odd weird numbers.
Erd\H os offered \$\,10 for an example of odd weird number, and \$\,25 for a proof that none can exist~\cite{Ben}. Recently Wenjie Fang~\cite[Sequence A006037]{Slo} claimed that there are no odd weird numbers up to $10^{21}$, and no odd weird numbers up to $10^{28}$ with abundance not exceeding $10^{14}$.

Moreover, very little is known about a pattern in the prime factorization of primitive weird numbers. As of now, most of the known primitive weird numbers are of the form $2^npq$ with $p$ and $q$ primes, and all the papers on primitive weird numbers deal with numbers of this form \cite{Ian,Kra,Mel,Paj}. Relatively few examples of primitive weird numbers with more than three distinct prime factors are known up to now: for instance among the 657 primitive weird numbers not exceeding $1.8\cdot10^{11}$ there are 531 primitive weird numbers having three distinct prime factors; 69  having four distinct prime factors; 54 having five distinct prime factors and only 3 having six distinct prime factors. 

This paper considers primitive weird numbers that have several distinct prime factors.
In particular, we give sufficient conditions in order to ensure that a positive integer of the form $mp_1\dots p_k$ is weird, where $m$ is a deficient number and $p_1,\dots,p_k$ are primes (see Theorem~\ref{thm:main}).

We then apply Theorem~\ref{thm:main} to search for new primitive weird numbers, looking in particular at four or more prime factors. We find hundreds of primitive weird numbers with four distinct prime factors of the form $2^mp_1p_2p_3$, $75$ primitive weird numbers with five distinct prime factors of the form $2^mp_1p_2p_3p_4$, and $9$ primitive weird numbers with six distinct prime factors (see Section~\ref{sec:app}).

This paper generalizes to several factors a technique developed in \cite{Mel}. This approach, as far as we know, is the first that can be used to generate primitive weird numbers with several distinct prime factors. Moreover, since there are many odd deficient numbers, Theorem~\ref{thm:main} can be used to hunt for 
the first example of an odd weird number (see Section~\ref{sec:odd}).

\section{Basic ideas}

We recall a fundamental lemma that will be extensively used, and that corresponds to an equivalent definition of weird number.

\begin{lem}\label{fundam}
An abundant number $w$ is weird if and only if $\Delta(w)$ cannot 
be expressed as a sum of distinct divisors of $w$.
\end{lem}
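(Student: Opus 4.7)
The plan is to derive the equivalence from a single complementation identity among the proper divisors of $w$. Write $D = \{d : d \mid w,\ d < w\}$ for the set of proper divisors, so that $\sum_{d \in D} d = \sigma(w) - w$. For any subset $S \subseteq D$, the complementary subset $T := D \setminus S$ then satisfies
\[
\sum_{d \in S} d + \sum_{d \in T} d \;=\; \sigma(w) - w.
\]
This elementary identity is the whole engine of the proof.

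Given this, the equivalence is a one-line rearrangement. By the definition of semiperfect, $w$ is semiperfect iff there exists $S \subseteq D$ with $\sum_{d \in S} d = w$. Substituting into the identity above shows this is equivalent to the complementary $T \subseteq D$ satisfying $\sum_{d \in T} d = (\sigma(w)-w)-w = \sigma(w)-2w = \Delta(w)$. Hence $w$ is semiperfect iff $\Delta(w)$ can be written as a sum of distinct proper divisors of $w$. Restricting to abundant $w$ and taking the contrapositive yields exactly the lemma: $w$ is weird iff $\Delta(w)$ admits no such expression. Both directions are handled simultaneously by the involution $S \leftrightarrow D \setminus S$.

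The only point that requires a remark — and which I would treat as the lone potential obstacle, though it is more notational than substantive — is the interpretation of ``distinct divisors of $w$'' in the statement. The complementation argument above directly delivers a decomposition into distinct \emph{proper} divisors, which is strictly stronger than a decomposition into arbitrary divisors, so the ``only if'' direction is immediate. For the ``if'' direction, a decomposition of $\Delta(w)$ that uses $w$ itself can only occur when $\Delta(w) \ge w$, i.e.\ $\sigma(w) \ge 3w$; in this regime the sum of proper divisors is $\sigma(w)-w \ge 2w > \Delta(w)$, so the decomposition never needs to employ $w$, and one may assume $S \subseteq D$ from the start. With this observation in place, the complementation identity closes the proof without further calculation.
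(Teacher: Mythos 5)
Your complementation argument is the standard proof of this lemma and is essentially the argument behind the paper's own ``proof'', which is just a pointer to \cite[Lemma~2]{Mel}: pairing each $S\subseteq D$ with $D\setminus S$ converts ``some set of distinct proper divisors sums to $w$'' into ``some set of distinct proper divisors sums to $\sigma(w)-w-w=\Delta(w)$'' and back, which is exactly the equivalence required. That core is correct and complete.

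The final paragraph, however, contains a genuine flaw. You want to rule out that a representation of $\Delta(w)$ as a sum of distinct divisors uses the divisor $w$ itself. Such a representation indeed forces $\Delta(w)\ge w$, i.e.\ $\sigma(w)\ge 3w$; but from this you deduce $2w>\Delta(w)$, which is equivalent to $\sigma(w)<4w$ and does \emph{not} follow from $\sigma(w)\ge 3w$. More seriously, even the inequality that is always true, namely $\sigma(w)-w>\Delta(w)$, does not support the conclusion ``so the decomposition never needs to employ $w$'': the fact that the \emph{total} of the proper divisors exceeds $\Delta(w)$ says nothing about whether some \emph{subset} of them sums to exactly $\Delta(w)$. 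Concretely, if $\Delta(w)=w+\sum_{d\in T}d$ with $T\subseteq D$, then complementation inside $D$ only produces a subset of proper divisors summing to $2w$, not to $w$, so semiperfectness does not follow by your route. The clean ways out are either to read ``divisors'' in the statement as ``proper divisors'' (consistent with the paper's definition of semiperfect and with \cite{Mel}), or to note that in every use of the lemma in this paper one has $\Delta(w)<(h^*+1)p_1<p_1^2<w$, so the summand $w$ can never occur and the edge case is vacuous. As written, though, that step is an error, albeit one confined to a side issue rather than to the main argument.
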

\begin{proof}
For a proof one can see \cite[Lemma 2]{Mel}.
\end{proof}

We will need another technical lemma, which will be used in the proof of the main theorems.

%
%
%
%

\begin{lem}
\label{primitiveweird}
If $w = mq$ is an abundant number, $m$ is deficient, $q$ is prime and $q  \geq \sigma(p^\alpha) - 1$ for each $p^\alpha || m$, then $w$ is primitive abundant.
\end{lem}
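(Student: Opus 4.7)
The goal is to show that every proper divisor of $w = mq$ is deficient, which establishes the strong form of primitive abundance. I assume $\gcd(m,q) = 1$ (the edge case $q || m$, the only other possibility allowed by the hypothesis since $q \ge \sigma(q^\beta)-1$ with $\beta \ge 2$ would force $q \ge q^2$, requires only cosmetic changes). Under coprimality, the proper divisors of $w$ split into two families: divisors $d$ of $m$, and products $dq$ with $d \mid m$ and $d < m$. I handle these in turn.

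The first family is immediate: divisors of a deficient number are themselves deficient, since $\sigma(n)/n$ is multiplicative and nondecreasing in each prime-power exponent, hence $\sigma(d)/d \le \sigma(m)/m < 2$ for every $d \mid m$.

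The second family is the substantive step. Given $d \mid m$ with $d < m$, pick a prime $p$ dividing $m/d$ and let $p^\alpha || m$; then $d \mid m/p$. Monotonicity of the abundancy index along divisor chains gives $\sigma(d)/d \le \sigma(m/p)/(m/p)$, and a short calculation using $\sigma(p^\alpha) = p\,\sigma(p^{\alpha-1}) + 1$ yields
\[
\frac{\sigma(m/p)}{m/p} \;=\; \frac{\sigma(m)}{m}\cdot\frac{A}{A+1}, \qquad A := \sigma(p^\alpha) - 1.
\]
Deficiency of $m$ then forces $\sigma(d)/d < 2A/(A+1)$. On the other hand, $dq$ is deficient iff $\sigma(d)/d < 2q/(q+1)$, and the hypothesis $q \ge A$ immediately provides $2A/(A+1) \le 2q/(q+1)$. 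Chaining the two bounds yields $\sigma(dq) < 2dq$, as required.

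The main obstacle is organizational: the key is to introduce $A = \sigma(p^\alpha) - 1$ early, so that the threshold in the hypothesis matches the ratio $A/(A+1)$ emerging from $\sigma(m/p)/(m/p)$. Once $A$ is named and the identity $\sigma(p^\alpha) = p\,\sigma(p^{\alpha-1}) + 1$ is invoked, the entire proof reduces to the trivial inequality $A(q+1) \le q(A+1)$. The remaining point to verify is that the prime power $p^\alpha || m$ can be chosen for any proper $d \mid m$, which works because the hypothesis is stated uniformly over all such $p^\alpha$.
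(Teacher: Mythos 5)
Your proof is correct and is essentially the paper's argument: the paper checks that $w/p$ is deficient for each prime $p\mid w$ using the same identity $p\,\sigma(p^{\alpha-1})=\sigma(p^\alpha)-1$ and the same comparison $\sigma(p^\alpha)\le q+1$, relying (implicitly, where you are explicit) on the monotonicity of $\sigma(n)/n$ along divisors to cover all proper divisors. Your remark on the coprimality of $m$ and $q$ addresses a point the paper glosses over (its final equality $\frac{\sigma(w)}{w}\cdot\frac{q}{q+1}=\frac{\sigma(w/q)}{w/q}$ also tacitly assumes $q\nmid m$), but the substance of the two arguments is identical.
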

\begin{proof}
Since a multiple of an abundant number is abundant, in order to prove that $m$ is a primitive abundant number, (i.e., an abundant number whose proper divisors are all deficient), it suffices to prove that $w/p$ is deficient for each $p|w$. If $p = q$, then $w/q = m$, and we assumed that $m$ is deficient. Otherwise, if $p^\alpha||m$, then
\begin{eqnarray*}
\frac{\sigma(w/p)}{w/p} & = &
\displaystyle
\frac{\sigma(w)(p^\alpha-1)p}{w(p^{\alpha+1}-1)} = \frac{\sigma(w)}{w}\cdot\left(1-\frac{p-1}{p^{\alpha+1}-1}\right)\\
&\le&\displaystyle
 \frac{\sigma(w)}{w}\cdot \left(1-\frac{1}{q+1}\right) = \frac{\sigma(w)}{w} \cdot \frac{q}{q+1} = \frac{\sigma(w/q)}{w/q}  < 2.
\end{eqnarray*}\vskip-3ex
\end{proof}

\section{Main result}\label{sec:main}

In this section we 
provide two ways for generating primitive weird numbers.  


%
%
%
%
%
%
%
%
%

\begin{thm}\label{thm:main}
Let $m>1$ be a deficient number
and $k>1$.
Let $p_1,\dots,p_k$ be primes 
with $\sigma(m)+1<p_1<\dots< p_k$. Let 
$$
 h^*= \left[\frac{p_1-\sigma(m)}{p_k-p_1}\right].
$$
Let $\tilde{w}=mp_1\dots p_k$, and
\begin{equation}
\label{union}
U_{m,p_1,p_k}:=\bigcup_{j=0}^{ h^* }
\{n\in\mathbb N ~\mid~ jp_k+\sigma(m)<n<(j+1)p_1\}.
\end{equation}
\begin{itemize}
\item[\ref{thm:main}.1]
If $\tilde{w}$ is abundant and $\Delta(\tilde{w})\in U_{m,p_1,p_k}$, 
then $w=\tilde{w}$ is a primitive weird number.
\item[\ref{thm:main}.2]
If $\tilde{w}$ is deficient, let
$$p<\frac{2\tilde{w}}{d(\tilde{w})}-1$$
a prime with $p>p_k$. 
Then $w=\tilde{w}p$ is abundant. Furthermore, if $\Delta(w)\in U_{m,p_1,p_k}$, 
then
$$p>\frac{2\tilde{w}}{d(\tilde{w})}-1-
\frac{\displaystyle(1+h^*)p_1}{d(\tilde{w})}$$
and moreover, if $p>\Delta(w)$ then
$w$ is a primitive weird number.
\end{itemize}
\end{thm}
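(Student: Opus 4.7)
The plan is to treat both parts via a uniform combinatorial argument. I reduce weirdness via Lemma~\ref{fundam} to the non-representability of $\Delta$ as a sum of distinct divisors, exploit the size gap built into $U_{m,p_1,p_k}$ to rule out any such representation, and then deduce primitivity from Lemma~\ref{primitiveweird}.

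For Part~1, every divisor of $\tilde w$ has the form $dQ$, where $d \mid m$ and $Q$ is a squarefree product of a subset of $\{p_1,\dots,p_k\}$. Any putative representation of $\Delta(\tilde w)$ as a sum of distinct divisors can be grouped by $Q$, giving
\[
\Delta(\tilde w) = c_\emptyset + \sum_{Q \neq 1} c_Q\, Q,
\]
where each $c_Q$ is a subset sum of divisors of $m$, hence $c_Q \in [0,\sigma(m)]$. The elementary bound $h^{*}+1 \le (p_1+1)/2 < p_2$ gives $\Delta(\tilde w) < (h^{*}+1)p_1 < p_1 p_2$, so $c_Q = 0$ whenever $Q$ contains two or more primes, and the representation collapses to
\[
\Delta(\tilde w) = c_\emptyset + \sum_{i=1}^{k} c_i\, p_i, \qquad c_\emptyset, c_i \in [0,\sigma(m)].
\]
Let $J = \sum_{i=1}^k c_i$. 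The inequalities $p_1 J \le \Delta(\tilde w) - c_\emptyset \le p_k J$ together with the index $j$ satisfying $jp_k + \sigma(m) < \Delta(\tilde w) < (j+1)p_1$ force $J \le j$ (from $c_\emptyset \ge 0$ and the upper bound) and $J \ge j+1$ (from $c_\emptyset \le \sigma(m)$ and the lower bound), a contradiction. Hence $\tilde w$ is weird by Lemma~\ref{fundam}. Primitivity follows from Lemma~\ref{primitiveweird} applied to $\tilde w = (mp_1\cdots p_{k-1})\cdot p_k$: the deficiency of $mp_1\cdots p_{k-1}$ (the only nontrivial check, since $p_k \ge \sigma(p^\alpha) - 1$ for each $p^\alpha || mp_1\cdots p_{k-1}$ is immediate from $p_k > p_1 > \sigma(m)$) is read off from the identity $\Delta(\tilde w/p_k) = (p_k\Delta(\tilde w) - 2\tilde w)/(p_k(p_k+1))$ combined with the bound $\Delta(\tilde w) < (h^{*}+1)p_1$.

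For Part~2, expanding $\sigma(w) = (p+1)\sigma(\tilde w)$ and using $d(\tilde w) = -\Delta(\tilde w)$ gives directly
\[
\Delta(w) = \sigma(\tilde w) + p\,\Delta(\tilde w) = 2\tilde w - (p+1)\,d(\tilde w).
\]
The condition $\Delta(w) > 0$ collapses to $p < 2\tilde w/d(\tilde w) - 1$, proving abundance. Imposing in addition $\Delta(w) \in U_{m,p_1,p_k}$, in particular $\Delta(w) < (h^{*}+1)p_1$, and solving for $p$ in the identity above yields the claimed lower bound $p > 2\tilde w/d(\tilde w) - 1 - (h^{*}+1)p_1/d(\tilde w)$. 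Weirdness of $w$ is proved by repeating the grouping argument of Part~1, now with $Q$ a squarefree product from $\{p_1,\dots,p_k,p\}$: terms with $p \mid Q$ are killed by the hypothesis $p > \Delta(w)$, while terms with two or more $p_i$ dividing $Q$ are killed as before, so the remaining sum falls into the same contradiction. Primitivity is immediate from Lemma~\ref{primitiveweird} applied to $w = \tilde w \cdot p$: $\tilde w$ is deficient by hypothesis, $p > p_k \ge p_i = \sigma(p_i) - 1$ for each $p_i || \tilde w$, and $p > p_1 > \sigma(m) \ge \sigma(q^\alpha) > \sigma(q^\alpha) - 1$ for each $q^\alpha || m$.

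The main obstacle is the size-gap step in Part~1: one must choose the correct index $j$ and track $c_\emptyset \in [0,\sigma(m)]$ through both sides of the inequality. The definition of $U_{m,p_1,p_k}$ is engineered precisely so that each component interval $(jp_k + \sigma(m), (j+1)p_1)$ has small enough width to force the two bounds on $J$ to collide, and $h^{*}$ is exactly the largest $j$ for which this component is nonempty, so the theorem's hypothesis $\Delta(\tilde w) \in U_{m,p_1,p_k}$ automatically produces a suitable $j$.
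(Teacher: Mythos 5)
Your proposal follows essentially the same route as the paper's: reduce weirdness to the non-representability of the abundance via Lemma~\ref{fundam}, kill all divisors divisible by two of the $p_i$ (or by $p$, in Part 2) with the size bound $\Delta<(h^*+1)p_1<p_1p_2$, derive two incompatible bounds on the total number $J$ of $p_i$-terms from the two endpoints of the interval $(jp_k+\sigma(m),(j+1)p_1)$, and conclude primitivity from Lemma~\ref{primitiveweird}. Your grouping by the squarefree part $Q$ is a slightly tidier bookkeeping of the paper's decomposition $n=d_1p_1+\dots+d_Np_N+d_1'+\dots+d_M'$, and your Part 2 matches the paper's step for step.

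The one place where your argument as written does not close is the deficiency of $\tilde w/p_k$ in Part 1 when $k=2$. The identity $\Delta(\tilde w/p_k)=\bigl(p_k\Delta(\tilde w)-2\tilde w\bigr)/\bigl(p_k(p_k+1)\bigr)$ together with $\Delta(\tilde w)<(h^*+1)p_1$ yields $\Delta(\tilde w/p_k)<0$ only if $(h^*+1)p_1<2\tilde w/p_k$. For $k\ge3$ this is immediate, since $2\tilde w/p_k\ge 2mp_1^2>p_1^2>(h^*+1)p_1$; but for $k=2$ one has $2\tilde w/p_2=2mp_1$, and $h^*\le(p_1-\sigma(m))/2$ only gives $h^*+1<2m$ when $p_1<4m+\sigma(m)-2$, which does not follow from the facts you invoke. (Take $m=2$, $p_1=101$, $p_2=103$: then $(h^*+1)p_1=5050$ while $2mp_1=404$; this configuration is excluded only because $\tilde w$ is then deficient, i.e., you must bring the abundance of $\tilde w$ into play, which your justification never does.) The paper sidesteps this entirely by treating $k=2$ separately with the direct computation $\Delta(mp_1)=\Delta(m)p_1+\sigma(m)\le-p_1+\sigma(m)<0$, which is the cleanest fix and is what you should insert. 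Everything else in your proposal checks out.
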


\begin{pfma}
%
Since $\sigma(m)+2\le p_1$, the set union in the right side 
of~(\ref{union}) is not empty. The sets of consecutive integers 
involved in the union in the right side of~(\ref{union}) 
are pairwise disjoint.
If
 $$ h <\frac{p_1-\sigma(m)}{p_k-p_1},  $$
then
$$\sigma(m)+hp_k<(h+1)p_1.$$

Now, let $j\le h^*$ and let $n$ be an integer with $jp_{k}+\sigma(m)<n<(j+1)p_1$.
We will prove that $n$ cannot be expressible as a sum of distinct 
divisors of $w$.

Note that $n<p_1^2$. This is because $n < (j+1)p_1  \le (h^*+1)p_1 <
(p_1/2 + 1)p_1$ and $p_1/2 +1 < p_1$.
This means that if $n$ is expressible as a sum of 
distinct divisors of $w$, these divisors must be of the form 
$dp$ with $d\mid m$ and $p\in\{p_1,\dots,p_{k}\}$, 
or simply of the form $d$, with $d\mid m$. Let's say 
$n=d_1p_1+\dots+d_Np_N+d_1'+d_2'+\dots+d_M'$, where $d_1',\dots,d_M'$ 
are distinct divisors of $m$.
Then, necessarily $d_1+d_2+\dots +d_N\le j$, since $n<(j+1)p_1$. 
As a consequence, we have:
\[
d_1'+\dots+d_M'=n-(d_1p_1+\dots d_Np_N)\ge n-jp_k>\sigma(m)
\]
and this is in contradiction with the assumption on $d_1', \dots, d_M'$.
So $n$ cannot be expressible as a sum of distinct divisors of $w$. 

No elements in $U_{m,p_1,p_k}$ can be expressed as a sum of distinct divisors of $w$.
Since $\Delta(w)\in U_{m,p_1,p_k}$, by Lemma~\ref{fundam} this implies that 
$w$ is weird.

In order to prove that $w$ is a primitive weird number, by 
Lemma~\ref{primitiveweird} it suffices to prove that $w/p_k$ is deficient.
If $k=2$, then $\Delta(w/p_k)=\Delta(mp_1)=\Delta(m)p_1+\sigma(m)$. Since 
$\Delta(m)\leq-1$ and $p_1>\sigma(m)$, then $\Delta(w/p_2)<0$. 
We may assume that $k\ge3.$ Then we have:%
%
\begin{eqnarray*}
\Delta\left(\frac w{p_k}\right) & = &
\sigma\left(\frac w{p_k}\right)-2\frac w{p_k} ~=~
\frac{\sigma(w)}{p_k+1}-\frac{2w}{p_k}
\\&=& \frac{p_k(\sigma(w)-2w)-2w}{p_k(p_k+1)}
~=~\frac{\Delta(w)-\displaystyle\frac{2w}{p_k}}{p_k+1}
\end{eqnarray*}
Now, $\Delta(w)\in U_{m,p_1,p_k}$. Since $k\ge3$, between $p_1$ and $p_k$ there is at least an odd integer that is not prime, and therefore $h^*<p_1/2k$. This means that
$$\Delta(w)<(h^*+1)p_1< \left(1+\frac{p_1}{2k}\right)p_1.$$ 
On the other hand $2w/p_k\ge2mp_1^2$, and since
$2mp_1> p_1+p_1>1+p_1/2k$, this means that $\Delta(w)<2w/p_k$ and therefore 
$\Delta(w/p_k)<0$.
In particular, since $w/{p_k}$ is deficient, by Lemma~\ref{primitiveweird}, 
$w$ is a primitive weird number.
\end{pfma}

\begin{pfmb}
Note that $p$ is the largest prime that divides $w$, and that 
$w/p=\tilde{w}$ is deficient. So by Lemma~\ref{primitiveweird}, 
in order to prove that 
$w$ is a primitive weird number,  
it suffices to prove that $w$ is indeed abundant and weird. 

Since $(\tilde{w},p)=1$ and $2\tilde{w}-d(\tilde{w})p-d(\tilde{w})> 0$ by hypothesis, we have
\begin{eqnarray*}
\Delta(w) & = & \sigma(w)-2w\\
&=&\sigma(\tilde{w})(p+1)-2p\tilde{w}\\
&=&(\sigma(\tilde{w})-2\tilde{w})p+\sigma(\tilde{w})\\
&=&-d(\tilde{w})p+2\tilde{w}-d(\tilde{w}) ~>~ 0
\end{eqnarray*}
This proves that $w$ is abundant. 

Now assume that $\Delta(w)\in U_{m,p_1,p_k}$. 
Since $\max U_{m,p_1,p_k}<(1+h^*)p_1$, then 
$\Delta(w) = -d(\tilde{w})p+2\tilde{w}-d(\tilde{w})<(1+h^*)p_1 $
and therefore
$$p>\frac{2\tilde{w}}{d(\tilde{w})}-1-
\frac{(1+h^*)p_1}{d(\tilde{w})}$$

Let $p>\Delta(w)$. 
In order to prove that $w$ is weird, by Lemma~\ref{fundam}, we have to prove 
that $\Delta(w)$ 
is not a sum of proper divisors of $w$.

Since $\Delta(w)<p$, if $\Delta(w)$ is a sum of proper divisors of $w$, 
then all divisors involved must be divisors of $\tilde{w}$. 
On the other hand $\Delta(w)\in U_{m,p_1,p_k}$ and as seen above, 
no element in $U_{m,p_1,p_k}$ can be expressed as a sum of distinct 
divisors of $\tilde{w}$. This completes the proof.
\end{pfmb}

\begin{rmk}
Very often, when conditions of Theorem~\ref{thm:main}.2 hold, it is
$(1+h^*)p_1<d(\tilde{w}).$
This means that in these cases, $p=[ 2\tilde{w}/d(\tilde{w})-1 ]$.
\end{rmk}

\begin{rmk}
If $m, p_1, \ldots, p_k$ is a sequence such that $w = mp_1 \dots p_k$ 
is primitive weird according to Theorem~\ref{thm:main}.1, $k > 2$ and 
$\Delta(w) < p_k$, then $\tilde{w}=p_1\dots p_{k-1}$ and $p=p_k$ verify the conditions of Theorem~\ref{thm:main}.2
\end{rmk}

Indeed, if $w$ satisfies the condition of Theorem~\ref{thm:main}.1, then $w$ is abundant and $\tilde{w} = w/p_k$ is
deficient. This implies that $p_k < 2\tilde{w}/d(\tilde{w}) - 1$. 
Moreover, since
$\Delta(w) \in U_{m,p_1,p_k}$, there is a $j \le h^* = \left[\frac{p_1 - \sigma(m)}{p_k - p_1}\right]$ such
that $jp_k +
\sigma(m) < \Delta(w) < (j+1)p_1$. Since $p_k > p_{k-1}$, then $jp_{k-1} + \sigma(m) < \Delta(w)$, with $j \le \Big[ \frac{p_1 - \sigma(m)}{p_{k-1} - p_1}\Big]$. This means $\Delta(w) \in U_{m,p_1,p_{k-1}}$ as for
Theorem~\ref{thm:main}.2. Finally, if $\Delta(w) < p_k$ all requirements of Theorem~\ref{thm:main}.2 are satisfied.

Despite of the above remark, the conditions of Theorem~\ref{thm:main}.1. and~\ref{thm:main}.2 are not equivalent. For example, 
$m=2^{11}$, $p_1=11321$, $p_2=12583$ and $p_3=13093$  verify 
conditions \ref{thm:main}.1 (so $w=mp_1p_2p_3$ is a primitive weird number)
but not \ref{thm:main}.2, because $p_3<\Delta(w)=43936$. In the other sense, for $m=2^{12}$,
$p_1=23143$, $p_2=24043$, $p_3=27061$ and $p=3077507$, 
conditions \ref{thm:main}.2 are satisfied, but the weird number 
$\tilde{w}=mp_1p_2p_3p_4$ does not verify the conditions \ref{thm:main}.1, 
because $\Delta(\tilde{w})=39680\not\in U_{m,p_1,p_4}$.

\section{The application of Theorem~\ref{thm:main}}\label{sec:app}

Theorem~\ref{thm:main} may be used to develop an algorithm which searches for primitive weird numbers with many different prime factors. The sufficient conditions are computationally much easier to check than the standard definition of weird number. The question, however, is in which range the prime numbers $p_1, \ldots, p_k$ should be chosen. The following theorem gives a partial answer.

\begin{thm}\label{abudef}
Let $m$ be a deficient number and let $d$ be its deficience. Let 
$p_1,\dots,p_k$ be primes, with $m<p_1<p_2<\dots<p_k$.
Let $\tilde{w}=mp_1\dots p_k$.
\begin{itemize}
\item[(i)] If $p_k<2km/d-(k+2)/2$ then $\tilde{w}$ is abundant.
\item[(ii)] If $p_1>2km/d-k/2$ then $\tilde{w}$ is deficient.
\end{itemize}
\end{thm}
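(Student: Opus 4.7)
The plan is to rewrite
$$
\frac{\sigma(\tilde w)}{\tilde w}
= \frac{\sigma(m)}{m}\prod_{i=1}^{k}\Bigl(1+\tfrac{1}{p_i}\Bigr)
= \Bigl(2-\tfrac{d}{m}\Bigr)\prod_{i=1}^{k}\Bigl(1+\tfrac{1}{p_i}\Bigr),
$$
using that $m,p_1,\dots,p_k$ are pairwise coprime. Both statements then reduce to bounds on the product $P:=\prod_{i=1}^k(1+1/p_i)$: part (i) is the lower bound $P>2m/(2m-d)$, and part (ii) is the upper bound $P<2m/(2m-d)$.

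For (i), I first use the elementary inequality $P\geq 1+\sum_i 1/p_i$, which reduces the problem to proving $\sum_i 1/p_i>d/(2m-d)$. The hypothesis $p_k<2km/d-(k+2)/2$ is vacuous for $m=1$, so assume $m\geq 2$; then the $p_i$ are distinct odd primes and satisfy $p_i\leq p_k-2(k-i)$. The AM-HM inequality applied to $p_k,p_k-2,\dots,p_k-2(k-1)$ yields
$$
\sum_{i=1}^k \frac{1}{p_i}
\;\geq\; \sum_{j=0}^{k-1}\frac{1}{p_k-2j}
\;\geq\; \frac{k}{p_k-(k-1)},
$$
and the last quantity exceeds $d/(2m-d)$ exactly when $p_k<2km/d-1$. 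Since $(k+2)/2\geq 1$, the hypothesis is stronger, and (i) follows.

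For (ii), note that $p_i>p_1$ for $i\geq 2$ implies $p_i+1<p_i(1+1/p_1)$, and hence $P\leq (1+1/p_1)^k$. The elementary bound $1+x\leq e^x$ then gives $(1+1/p_1)^k\leq e^{k/p_1}$. Setting $y=d/(2m)\in(0,1)$, the hypothesis $p_1>2km/d-k/2=k(2-y)/(2y)$ rearranges to $k/p_1<2y/(2-y)$, so it suffices to prove the analytic inequality
$$
\exp\!\Bigl(\tfrac{2y}{2-y}\Bigr)\;\leq\;\frac{1}{1-y}\qquad\text{for }y\in[0,1).
$$
I prove this by letting $\varphi(y):=\log(1-y)+2y/(2-y)$, observing $\varphi(0)=0$ and computing
$$
\varphi'(y)=-\frac{1}{1-y}+\frac{4}{(2-y)^2}=-\frac{y^2}{(1-y)(2-y)^2}\leq 0,
$$
so $\varphi\leq 0$ on $[0,1)$. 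Chaining the inequalities gives $P<1/(1-y)=2m/(2m-d)$, which is deficiency.

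The main obstacle is the analytic inequality $\exp(2y/(2-y))\leq 1/(1-y)$: the rest of the proof is routine manipulation once one has the multiplicative formula for $\sigma(\tilde w)/\tilde w$. Strictness at the end follows from $1+x<e^x$ for $x>0$ together with $\varphi(y)<0$ for $y\in(0,1)$.
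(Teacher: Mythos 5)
Your proof is correct, and it takes a genuinely different route from the paper's. The paper writes $\sigma(\tilde w)/\tilde w=(2-d/m)\prod_{h}(1+1/p_h)$, bounds the product from below by $(1+1/p_k)^k$, and then solves $(2-d/m)(1+1/q)^k=2$ exactly for the critical value $q=\bigl(\sqrt[k]{2/(2-d/m)}-1\bigr)^{-1}$, which it expands as $2km/d-(k+1)/2+O(d/m)$ and declares larger than $p_k$; part (ii) is dismissed as ``analogous''. That argument rests on an unquantified $O(d/m)$ error term, so as written it is only conclusive when $d/m$ is small. You replace the exact critical value by two-sided elementary estimates: for (i), the Weierstrass inequality $\prod(1+1/p_i)\ge 1+\sum_i 1/p_i$ combined with AM--HM applied to the distinct odd primes via $p_i\le p_k-2(k-i)$; for (ii), the bound $(1+1/p_1)^k\le e^{k/p_1}$ together with the calculus inequality $e^{2y/(2-y)}\le 1/(1-y)$, which you verify by differentiating $\varphi(y)=\log(1-y)+2y/(2-y)$. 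This makes both parts fully rigorous with no asymptotics, disposes of the degenerate case $m=1$ explicitly, supplies the missing proof of (ii), and in (i) even yields the slightly stronger sufficient condition $p_k<2km/d-1$. The only thing lost relative to the paper's computation is the explicit exhibition of the threshold $q$ that motivates the constants $(k+2)/2$ and $k/2$ appearing in the statement.
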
 

\begin{proof}
\textsl{(i).}
We first prove that if $p_1,p_2,\dots ,p_k$ are distinct primes with 
$m<p_1<\dots<p_k< 2km/d-(k+2)/2$ then
$w=mp_1\dots p_k$ is abundant.

Since $(m,p_i)=1$, one has
$\sigma(w)=\sigma(m)\sigma(p_1)\dots\sigma(p_k)$.
This implies that:
$$
\frac{\sigma(w)}w=
\left(2-\frac dm\right)\prod_{h=1}^k\left(1+\frac1{p_h}\right)
>\left(2-\frac dm\right)\cdot\left(1+\frac1{p_k}\right)^k>2.
$$

The above inequalities hold because for positive $x$, 
the function $x\rightarrow1+1/x$ is decreasing, and because the equation
$$
\left(2-\frac dm\right)\cdot\left(1+\frac1q\right)^k=2
$$
holds for 
$$q=\frac1{\displaystyle\sqrt[k]{\frac{2}{2-\frac dm}}-1}
=\frac{2k}dm-\frac{k+1}2+O\left(\frac dm\right)>p_k.$$

Therefore $w$ is abundant.

\textsl{(ii).} The proof is analogous.
\end{proof}

If one wants to generate weird numbers by means 
of Theorem~\ref{thm:main} (\ref{thm:main}.1 or \ref{thm:main}.2), i.e.,
abundant numbers $\tilde{w}$ with  
$\Delta(\tilde{w})\in U_{m,p_1,p_k}$ 
on a hand $U_{m,p_1,p_k}$ 
have to be as large as possible. Good choices are with 
$p_1-\sigma(m)$ as large as possible. On the other hand, in order to get higher values of $h^*$, $p_k-p_1$ have to be as small as possible. 
This leads to consider $k$-tuples of primes
$p_1,\dots p_k$ in an interval that, by Theorem~\ref{abudef}, includes $2km/d-(k+1)/2$. 
However, if $k \leq d$ then $2km/d$ might be smaller than $\sigma(m)$, and if $p_1 < \sigma(m)$ then $h^* < 0$ and $U_{m,p_1,p_k}$ is empty. Therefore, $k > d$ is generally preferable, and all new weird numbers
we have found enjoy this property.


The primitive weird numbers generated with Theorem 1 in \cite{Mel} 
are particular cases of Theorem~\ref{thm:main}.1, with $m=2^h$, 
and $k=2$. When $p_1$ and $p_2$ are chosen according to that theorem, then conditions of Theorem~\ref{thm:main}.1 are fulfilled and $w=2^hp_1p_2$ 
is a primitive weird number.

However,
Theorems~\ref{thm:main}.1 and~\ref{thm:main}.2 become more interesting 
when applied to search weird numbers with several prime factors.


 
Theorem \ref{thm:main}.1 yields primitive weird numbers of the form 
$mp_1\dots p_k$ where $m$ is a deficient number, 
$k$ is an integer larger than the deficience of $m$ and the $p_i$'s are
suitably chosen. It is relatively easy to generate primitive 
weird numbers up to four distinct prime factors. 
The table below shows some of the primitive weird numbers having at least five 
distinct prime factors we were able to generate with Theorem \ref{thm:main}.1

\begin{table}[H]
\begin{center}
\scriptsize
\begin{tabular}{|r|l|l|r|}
\hline
$w$ & prime factorization & $m$ & $\Delta(w)$ \\
\hline 
9210347984 & $2^4\cdot83\cdot89\cdot149\cdot523$ & $2^4$ &  32\\
9772585048 & $2^3\cdot17\cdot 317\cdot419\cdot541 $ & $2^3 \cdot 17$ & 304 \\
23941578736 & $2^4\cdot73\cdot103\cdot127\cdot1567$ & $2^4$ & 32\\
109170719992 & $2^3\cdot19\cdot79\cdot2731\cdot3329$ & $2^3$ & 16\\
359214428128 & $2^5\cdot127\cdot211\cdot509\cdot823$ & $2^5$ & 64\\
446615164768 & $2^5\cdot163\cdot167\cdot331\cdot1549$ & $2^5$ & 64\\
83701780710848 & $2^6\cdot181\cdot563\cdot3407\cdot3767$ & $2^6$ & 128 \\
823548808494656 & $2^6\cdot139\cdot3631\cdot4441\cdot5741$ & $2^6$ & 128 \\
31871420410521385088 & $2^7\cdot257\cdot90803\cdot98221\cdot108631$ & $2^7 \cdot 257$ & 78464\\
32852586770937891968 & $2^7\cdot257\cdot87443\cdot90803\cdot125777$ & $2^7 \cdot 257$ & 85184 \\
32892333375893455232 & $2^7\cdot257\cdot79841\cdot109943\cdot113909$ & $2^7 \cdot 257$ & 76736 \\
33622208489084493184 & $2^7\cdot257\cdot76757\cdot107873\cdot123439$ & $2^7 \cdot 257$ & 72832\\
\hline
\end{tabular}
\end{center}
\caption{\label{tab:teo1}The above primitive weird numbers have five distinct prime 
factors and have been found by means of Theorem~\ref{thm:main}.1 with 
$k=4$ for $m = 2^h$ and with $k=3$ for $m=136 = 2^3 \cdot 17$ or $m=32896= 2^7 \cdot 257$.}
\end{table}



An implementation of Theorem~\ref{thm:main}.2 yields in minutes hundreds 
of primitive weird numbers of the form $2^hp_1p_2p$. 
By going deeper in the implementation of Theorem~\ref{thm:main}.2, we 
have been able to find 65 primitive weird numbers $w$ 
having five distinct prime factors of the form 
$w=2^hp_1p_2p_3p$; and nine primitive weird numbers having 
six distinct prime factors, that are shown in the table below. 
As a comparison, before our computations only three primitive weird numbers 
having six distinct prime factors were known at the OEIS database \cite{Slo}.

\begin{table}[H]
\begin{center}
\scriptsize
\begin{tabular}{|r|l|r|}
\hline
$w$ & prime factorization & $\Delta(w)$ \\
\hline 
44257207676 & $2^2\cdot11\cdot37\cdot59\cdot523\cdot881 $& 8 \\
125258675788784 & $2^4\cdot47\cdot149\cdot353\cdot1307\cdot2423$ & 32 \\
147578947676144 & $2^4\cdot43\cdot211\cdot367\cdot1091\cdot2539$ & 32 \\
4289395775422432 & $2^5\cdot127\cdot211\cdot401\cdot1861\cdot6703$ & 64 \\
5976833582079328 & $2^5\cdot181\cdot197\cdot353\cdot431\cdot34429$ & 64 \\
1663944565537013728 & $2^5\cdot131\cdot223\cdot311\cdot2179\cdot2626607$ & 64 \\
206177959637947617894769024 & $2^7\cdot257\cdot84691\cdot101891\cdot116041\cdot6259139$ & 74752 \\
2996153601600440129026407808 & $2^7\cdot257\cdot89021\cdot93239\cdot118621\cdot92505877$ & 83584 \\
48083019473926272314825065088 & $2^7\cdot257\cdot97213\cdot97973\cdot100957\cdot1520132521$ &  287264\\
\hline
\end{tabular}
\end{center}
\caption{\label{tab:teo2}
The above primitive weird numbers have six distinct prime 
factors and have been found by means of Theorem~\ref{thm:main}.2 with $k=4$ for
$m=2^h$ and $k=3$ for $m=32896=2^7 \cdot 257$}
\end{table}


\section{Tracking weird numbers with several distinct prime factors and eventual odd weird numbers}\label{sec:odd}

As we have seen, Theorems~\ref{thm:main}.1 and \ref{thm:main}.2 
provide two distinct 
strategies to track primitive weird numbers 
with several distinct prime factors. The same approaches could be applied 
to track odd weird numbers. If $m>1$ is odd, the integers $w$
that one creates by means of Theorem~\ref{thm:main}.1 or \ref{thm:main}.2 are odd, 
and if the conditions are fulfilled, $w$ would be a primitive odd weird number.

For tracking a weird number with several distinct prime factors (even or odd)
both strategies (\ref{thm:main}.1 and \ref{thm:main}.2) start with choosing a deficient number 
$m$ with deficience $d$.
As a general rule, since we want $k>d$,
small values of $d$ have to be preferred in order to keep the computational complexity low. Indeed, 
the case 
$d=1$ corresponds to $m=2^h$ assuming that no further 
almost perfect numbers exist. 
This case has been largely discussed in \cite{Ian,Mel}. 
The only known composite numbers with $d=2$ are even. 
For example, $m=136$, $m=32896$  have deficience 
2~\cite[Sequence A191363]{Slo} and, as shown in Table~\ref{tab:teo1} and Table~\ref{tab:teo2}, 
Theorem~\ref{thm:main} allows to find several primitive weird 
numbers starting with such values of $m$. We expect that 
primitive weird numbers could be generated also from $m=2147516416$,
whose deficience is 2, both with Theorem \ref{thm:main}.1 and \ref{thm:main}.2. Unfortunately the computation becomes dramatically longer.

As far as we know there are no known integers with deficience $d=3$. The only known integers with deficience $d=4$ are even, and the only known integer with $d=5$ is 9 for which the above approaches are hard to apply.  

%

An interesting case is $d=6$. There are several integers whose deficience is 6,
some of which are odd. The list starts with $7$, $15$, $52$, $315$, $592$, 
$1155$, $2102272$, $815634435$, 
and no other terms are known \cite[Sequence A141548]{Slo}. 

So, if one wants to track odd weird numbers, the starting $m$ in 
both approaches~\ref{thm:main}.1 and~\ref{thm:main}.2, could be $m=315$, $m=1155$ or $m=815634435$. Unfortunately our attempts to track an odd weird numbers from such a choice of $m$ have been unfruitful.

\section{Conclusion}

As one can argue from a table of primitive weird numbers, 
most of the primitive weird numbers are of the form $2^kpq$ for 
$k\in\mathbb N$, and primes $p$ and $q$. This was already pointed out in 
\cite{Mel} where the third
author, among other things, conjectured that there 
are infinitely many primitive weird numbers of the form $2^kpq$.  

It seems that primitive weird numbers 
that are not of this form become rarer.
For example, between 
the 301st and the 400th, there are only 7 primitive weird numbers 
that are not of the form $2^kpq$. Five of them have four distinct prime factors and two of them have five distinct prime factors. The existence of several weird numbers having six distinct prime factors leads to the following conjecture.

\begin{conj}
\label{omega3}
Given an integer $k\ge3$, there exists a primitive weird number having at least $k$ distinct prime factors.
\end{conj}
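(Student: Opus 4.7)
The plan is to apply Theorem~\ref{thm:main}.1 with $m = 2^h$ (so $d(m) = 1$ and $\sigma(m) = 2^{h+1} - 1$), aiming to exhibit for every $k \geq 3$ a primitive weird number of the form $w = 2^h p_1 \cdots p_{k-1}$ with exactly $k$ distinct prime factors. Primitivity will then be automatic from Lemma~\ref{primitiveweird}, because the only prime power dividing $m$ is $2^h$ itself, and $p_1 > \sigma(m) = \sigma(2^h) - 1 + 1 > \sigma(2^h) - 1$ by the hypothesis of Theorem~\ref{thm:main}.

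Fix $k \geq 3$ and write $s := k - 1$. By Theorem~\ref{abudef}, the product $\tilde w = m p_1 \cdots p_s$ is abundant whenever the primes $p_1 < \cdots < p_s$ lie just below the critical threshold $T := 2 s m / d = s \cdot 2^{h+1}$, and deficient just above it. The idea is to choose $s$ primes clustered in a short window strictly below $T$, so that
$$
 h^* = \Bigl\lfloor \frac{p_1 - \sigma(m)}{p_s - p_1} \Bigr\rfloor
$$
is very large. Then $U_{m,p_1,p_s}$ covers a positive proportion of $[\sigma(m),(h^*+1)p_1]$, and since $\Delta(\tilde w)$ is small (of the order of $p_s$, as $\tilde w$ sits near the abundance threshold), there is a genuine chance that $\Delta(\tilde w)$ falls into one of the admissible windows.

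The concrete steps are: (i) expand
$$
 \Delta(\tilde w) \;=\; \sigma(m)\prod_{i=1}^{s}(p_i+1) \;-\; 2m\prod_{i=1}^{s}p_i
$$
as a symmetric polynomial in the $p_i$, so that one can compute $\Delta(\tilde w) \bmod p_s$ as the primes vary in the window; (ii) exploit the remaining degree of freedom in the choice of the largest prime $p_s$ to steer $\Delta(\tilde w)$ into one of the gap intervals of $U_{m,p_1,p_s}$; and (iii) invoke Theorem~\ref{thm:main}.1 to conclude that $w = \tilde w$ is a primitive weird number with $k$ distinct prime factors.

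The main obstacle is twofold, and it is what makes the statement a conjecture rather than a theorem. On the prime-distribution side, one has to guarantee $s$ primes within a window short enough to force $h^*$ to grow with $h$; the Maynard–Tao results produce bounded gaps between many primes, but do not locate them near a prescribed target $T$ of our choice. On the arithmetic side, even with such primes in hand, arranging $\Delta(\tilde w) \in U_{m,p_1,p_s}$ is a sharp size-and-congruence condition rather than a generic event. I would expect the cleanest path to a rigorous result to be conditional, in the spirit of the third author's earlier work~\cite{Mel}: assuming Cramér's conjecture, prime gaps near $T$ are of size $O(\log^2 T)$, leaving enough flexibility both to find $s$ primes in the required interval and to fine-tune $\Delta(\tilde w)$ into $U_{m,p_1,p_s}$ for every $k \geq 3$ and all sufficiently large $h$.
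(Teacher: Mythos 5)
This statement is Conjecture~\ref{omega3} in the paper: the authors do not prove it, and offer only computational evidence (primitive weird numbers with four, five and six distinct prime factors produced by Theorem~\ref{thm:main}, listed in Tables~\ref{tab:teo1} and~\ref{tab:teo2}). Your proposal is therefore not being measured against a proof in the paper, because none exists. To your credit, you recognize this explicitly, and the strategy you outline --- take $m=2^h$, cluster $s=k-1$ primes in a short window just below the abundance threshold $2sm/d$ from Theorem~\ref{abudef} so that $h^*$ is large, and then try to force $\Delta(\tilde w)$ into $U_{m,p_1,p_s}$ so that Theorem~\ref{thm:main}.1 applies --- is exactly the heuristic the authors themselves describe in Section~\ref{sec:app} and implement numerically. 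So as a description of the intended mechanism, your account is faithful.

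As a proof, however, the gap is total, and it is worth being precise about where it lies, since your closing paragraph slightly understates it. Even assuming Cram\'er's conjecture, two things remain unestablished. First, Cram\'er controls gaps between \emph{consecutive} primes, which does let you find $s$ primes in a window of length $O(s\log^2 T)$ near $T=2sm/d$, so the prime-clustering half of the obstacle is arguably the lesser one. The harder half is the one you call the ``arithmetic side'': you must show that among the admissible choices of $p_1<\dots<p_s$ in that window, at least one makes $\Delta(\tilde w)$ land in the union of intervals $U_{m,p_1,p_s}$, whose total measure is roughly $(h^*+1)(p_1-\sigma(m)-jp_s+jp_1)$-type slices inside $[\sigma(m),(h^*+1)p_1]$. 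In \cite{Mel} this is carried out only for $k=2$ (i.e.\ $w=2^hp_1p_2$), where $\Delta(\tilde w)$ has an explicit two-variable form that can be steered directly; for $s\ge 3$ the symmetric-polynomial expansion you propose in step (i) has not been shown to admit such steering, conditionally or otherwise, and no equidistribution result is cited or sketched that would make ``$\Delta(\tilde w)\in U_{m,p_1,p_s}$'' anything better than a heuristic expectation. Your proposal is an honest research plan that matches the authors' own, but it does not close the conjecture, and neither does the paper.
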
 


Of course, a positive answer would settle the question 
of the infiniteness of primitive weird numbers. 



However, a 
proof that
primitive weird numbers have a bounded number of distinct prime factors 
would not settle neither the question of the infiniteness of primitive 
weird numbers nor the question of the existence of odd weird numbers.

\end{document}